\newtheorem{lemma}{Lemma}
\def\qed{$\Box$}
\newtheorem{theorem}{Theorem}
\date{}
\begin{document}
\title{A Lower Bound on the Crossing Number of Uniform Hypergraphs}
\author{Anurag Anshu 
\footnote{National University of Singapore, Singapore. Email: a0109169@nus.edu.sg} 
\and Saswata Shannigrahi \footnote{Indian Institute of Technology Guwahati, India. Email: saswata.sh@iitg.ernet.in}
\footnote{Corresponding author}
}
\maketitle


\begin{abstract}
In this paper, we consider the embedding of a complete $d$-uniform geometric hypergraph with $n$ vertices in general position in $\mathbb{R}^d$, where each hyperedge is represented as a $(d-1)$-simplex, and a pair of hyperedges is defined to cross if they are vertex-disjoint and contains a common point in the relative interior of the simplices corresponding to them. As a corollary of the Van Kampen-Flores Theorem, it can be seen that such a hypergraph contains $\Omega(\frac{2^d}{\sqrt{d}})$ $n\choose 2d$ crossing pairs of hyperedges. Using Gale Transform and Ham Sandwich Theorem, we improve this lower bound to $\Omega(\frac{2^d \log d}{\sqrt{d}})$ $n\choose 2d$.
\vspace{1mm}

\noindent \textbf{Keywords:} Geometric Hypergraph; Crossing Simplices; Ham Sandwich theorem; Gale Transform 
\end{abstract}

\section{Introduction}

Hypergraphs are a natural generalization of graphs. A hypergraph is a pair $(V,E)$ where $V$ is a finite set and $E \subseteq 2^{V}$ is a collection of subsets of $V$ \cite{matousek}. The elements of $E$ are called hyperedges. Given $n$ points in general position in $\mathbb{R}^d$, a geometric $(i+1)$-uniform hypergraph is defined as a collection of $i$-dimensional simplices as hyperedges, induced by some $(i + 1)$-tuples from the point set \cite{pach}. In this paper, we consider $i = d-1$. A complete geometric $d$-uniform hypergraph on $n$ vertices is represented as $K^d_n$ in this paper. The case $d=2, i=1$ has been studied in detail in the literature. The \textit{crossing number} of a geometric graph, known as the \textit{rectilinear crossing number}, is the minimum number of pairwise crossing edges in any of its straight-line drawings in $\mathbb{R}^2$, such that no three of its vertices lie on the same straight line. 

We define the {\it crossing number} $Cr_d(H)$ of a geometric hypergraph $H$ embedded in $\mathbb{R}^d$, for some $d \geq 2$, as the minimum possible number of \textit{pairwise crossings} of its hyperedges. A pair of hyperedges {\it overlap} if they have a common point in the relative interior of the simplices corresponding to them. It can be easily seen that a pair of $2$-simplices in $\mathbb{R}^3$ can overlap in two different ways. The first way, as shown in Figure \ref{fig1}, is called a \textit{crossing} and the second way is called an \textit{intersection}. Similarly in $\mathbb{R}^d$, there are various ways in which a pair of hyperedges can overlap. In order to define the crossing number $Cr_d(H)$ of a geometric hypergraph $H$ embedded in $\mathbb{R}^d$, we only count the crossing pair of hyperedges, i.e., a pair of overlapping hyperedges that have no vertices in common.  

As defined earlier, $Cr_2(K^2_n)$ denotes the number of crossing pair of edges in a straight-line drawing of $K^2_n$. The best known lower bound on this number is $Cr_2(K^2_n) > (0.375+\epsilon) {n \choose 4}$, where 
$\epsilon \approx 10^{-5}$  \cite{lovasz}. It is quite easy to show that the minimum number of pairwise crossing $2$-simplices in a complete geometric $3$-uniform hypergraph $K^3_n$ embedded in $\mathbb{R}^3$ is ${n \choose 6}$. This follows from the fact that any set of $6$ vertices in a general position in $\mathbb{R}^3$ contains a pair of crossing hyperedges. (See the Geometric Van Kampen-Flores Theorem below.) For a general dimension $d \geq 3$, let us denote by $c_d$ the minimum number of crossing pair of $(d-1)$-simplices spanned by a set of $2d$ vertices placed in general position in $\mathbb{R}^d$. This implies $Cr_d(K^d_n) \geq  c_d {n \choose 2d}$. 

In order to obtain a lower bound on $c_d$, we first use the geometric version of Van Kampen-Flores Theorem \cite{flores, kampen}. 

\begin{theorem}
(Geometric Van Kampen-Flores Theorem) For any $k \geq 1$, any set of $2k + 3$ points in $\mathbb{R}^{2k}$ contains two disjoint subsets $A$ and $B$ such that the convex hulls of $A$ and $B$ have a point common in their relative interior, and $|A| = |B| = k + 1$. 
\end{theorem}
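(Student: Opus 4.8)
\medskip

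The plan is to pass to the Gale transform of the point set, where the statement turns into a two‑dimensional assertion about a single line through the origin, and then to settle that assertion by a rotating‑line argument of the same flavour as the Ham Sandwich theorem. Write the points as $p_1,\dots,p_{2k+3}$ and assume, as in all our applications, that they are in general position, i.e.\ every $2k+1$ of them are affinely independent. Such a set affinely spans $\mathbb{R}^{2k}$, so its Gale transform is a sequence of vectors $\bar p_1,\dots,\bar p_{2k+3}$ in $\mathbb{R}^{(2k+3)-2k-1}=\mathbb{R}^{2}$, and general position transfers to the Gale side in two ways I would record first: no $\bar p_i$ is $0$ (because each $p_i$ lies in the affine hull of the remaining $2k+2$ points, since already $2k+1$ of them affinely span $\mathbb{R}^{2k}$), and no two $\bar p_i$ are parallel (a parallel pair would give a nonzero affine dependence $\sum_\ell\lambda_\ell p_\ell=0$, $\sum_\ell\lambda_\ell=0$ supported on at most $2k+1$ of the points, contradicting general position). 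The dictionary I would use is the standard one: for disjoint $A,B\subseteq\{1,\dots,2k+3\}$, the relative interiors of $\operatorname{conv}\{p_i:i\in A\}$ and $\operatorname{conv}\{p_i:i\in B\}$ meet if and only if there is $\mu\in\mathbb{R}^{2}$ with $\langle\bar p_i,\mu\rangle>0$ on $A$, $\langle\bar p_i,\mu\rangle<0$ on $B$ and $\langle\bar p_i,\mu\rangle=0$ off $A\cup B$ --- indeed the vectors $(\langle\bar p_i,\mu\rangle)_i$, $\mu\in\mathbb{R}^2$, are exactly the affine dependences of the $p_i$, and such a dependence with the stated sign pattern is precisely a common point of the two relative interiors.

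Because we want $|A|=|B|=k+1$ out of $2k+3$ points, exactly one index $j$ must be omitted, and the requirement $\langle\bar p_j,\mu\rangle=0$ together with the non‑parallelism of the $\bar p_i$ pins $\mu$ down to a nonzero normal of the line $\ell_j:=\mathbb{R}\bar p_j$ through the origin; the sets $A$ and $B$ are then forced to be the indices $i\neq j$ for which $\bar p_i$ lies strictly on the positive, respectively negative, side of $\ell_j$. So the entire theorem reduces to the following claim: there is an index $j$ such that the line $\ell_j$ has exactly $k+1$ of the vectors $\{\bar p_i:i\neq j\}$ strictly on each of its two sides.

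To prove this, rotate a directed line through the origin. For generic $\alpha$ let $L(\alpha)$ be the number of $\bar p_i$ strictly to the left of the directed line $\vec\ell(\alpha)$. As $\alpha$ increases, $L$ changes by exactly $\pm1$ whenever $\vec\ell$ sweeps past the direction of some $\bar p_i$ (a step $-1$) or past the opposite direction (a step $+1$). Over a half‑turn the line reverses orientation, so $L$ passes from some value $a$ to $2k+3-a$; as $2k+3$ is odd these two values lie on opposite sides of $k+1$, one being $\le k+1$ and the other $\ge k+2$. Hence on whichever half‑turn $L$ descends from a value $\ge k+2$ to a value $\le k+1$, it must at some moment make a downward unit step from $k+2$ to $k+1$; that moment is the line passing through some $\bar p_j$, and comparing $L$ just before and just after shows that, with $\vec\ell$ through $\bar p_j$, the remaining $2k+2$ vectors split as $k+1$ strictly on the left and $k+1$ strictly on the right. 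Taking $A$ and $B$ to be these two sets and $\mu$ a leftward normal of $\ell_j$, the dictionary of the first paragraph yields disjoint $(k+1)$‑sets $A,B$ whose convex hulls share a point in their relative interiors, which is exactly the conclusion of the theorem.

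The Gale‑transform bookkeeping of the first two paragraphs is routine; the one point demanding care --- and what I expect to be the only genuine subtlety --- is the rotating‑line step, and in particular the bookkeeping that makes the relevant unit step a \emph{downward} one, so that the balanced $(k+1,k+1)$ split is realised \emph{with one vertex on the line} rather than merely producing some Radon partition of all $2k+3$ vertices into parts of sizes $k+1$ and $k+2$.
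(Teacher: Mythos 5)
The paper does not actually prove this statement: Theorem 1 is quoted from van Kampen and Flores, whose standard proofs are topological (deleted joins/Borsuk--Ulam), so there is no in-paper argument to compare against. Your proposal is a correct, self-contained geometric proof of the general-position case, and it is pleasantly consonant with the toolkit the paper itself uses later (Section 3): you pass to the Gale transform, where $2k+3$ points in general position in $\mathbb{R}^{2k}$ become $2k+3$ pairwise non-parallel nonzero vectors in $\mathbb{R}^2$, and the desired crossing pair with $|A|=|B|=k+1$ becomes the existence of a line through the origin and through one vector $\bar p_j$ that splits the remaining $2k+2$ vectors evenly. Your dictionary (affine dependences are exactly the vectors $(\langle\bar p_i,\mu\rangle)_i$, and a dependence positive on $A$, negative on $B$, zero elsewhere is exactly a common relative-interior point) is the standard Gale duality used in Lemma 1 of the paper, and the rotating directed line with unit steps $\pm1$, together with the parity of $2k+3$ forcing a downward step from $k+2$ to $k+1$ at the moment the line passes through some $\bar p_j$, is sound: at that moment the other $2k+2$ vectors split $(k+1,k+1)$, which yields the theorem. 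What the topological proof buys that yours does not is generality: the statement in the paper carries no general-position hypothesis, and your argument covers only the generic case. This is not repaired by a routine perturbation, since limits preserve intersection of convex hulls but not of their relative interiors (and shrinking to minimal intersecting subsets loses the cardinality $k+1$). For the paper's purposes this is harmless --- the theorem is only ever applied to points in general position --- but if you intend your proof to stand for Theorem 1 as stated, you should either add the general-position hypothesis or supply the degenerate case separately.
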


If $d$ is even, i.e., $d = 2k$ for some $k$, this Theorem shows the existence of a crossing pair of 
$\frac{d}{2}$-simplices spanned by any set of $d+3$ vertices selected out of $2d$ vertices placed in general position in $\mathbb{R}^d$. This crossing pair can be extended to crossing pairs of $(d-1)$-simplices in $\binom{d-2}{\frac{d-2}{2}} = \Theta(\frac{2^d}{\sqrt{d}})$ ways. If $d$ is odd, i.e., $d = 2k'-1$ for some $k'$, we map all these $d+3$ vertices in $\mathbb{R}^{d}$ to $d+3$ vertices in $\mathbb{R}^{d+1}$ by adding a $0$ as the last coordinate of all these vertices. We also add one dummy vertex whose first $d$ coordinates are $0$ each, and whose last coordinate is non-zero. By the Geometric Van Kampen-Flores Theorem, this set of $2k'+3$ vertices in $\mathbb{R}^{2k'}$ contains a crossing pair of $(\frac{d+1}{2})$-simplices. Note that neither of these simplices contains the dummy vertex, as it is the only vertex in the $(d+1)$-st dimension and therefore can't be involved in a crossing. Therefore, this crossing pair can be extended to crossing pairs of $(d-1)$-simplices in $\binom{d-3}{\frac{d-3}{2}} = \Theta(\frac{2^d}{\sqrt{d}})$ ways.

\begin{figure}[t]
\label{fig1}
\centerline{
{\resizebox{4in}{1.5in}{\includegraphics{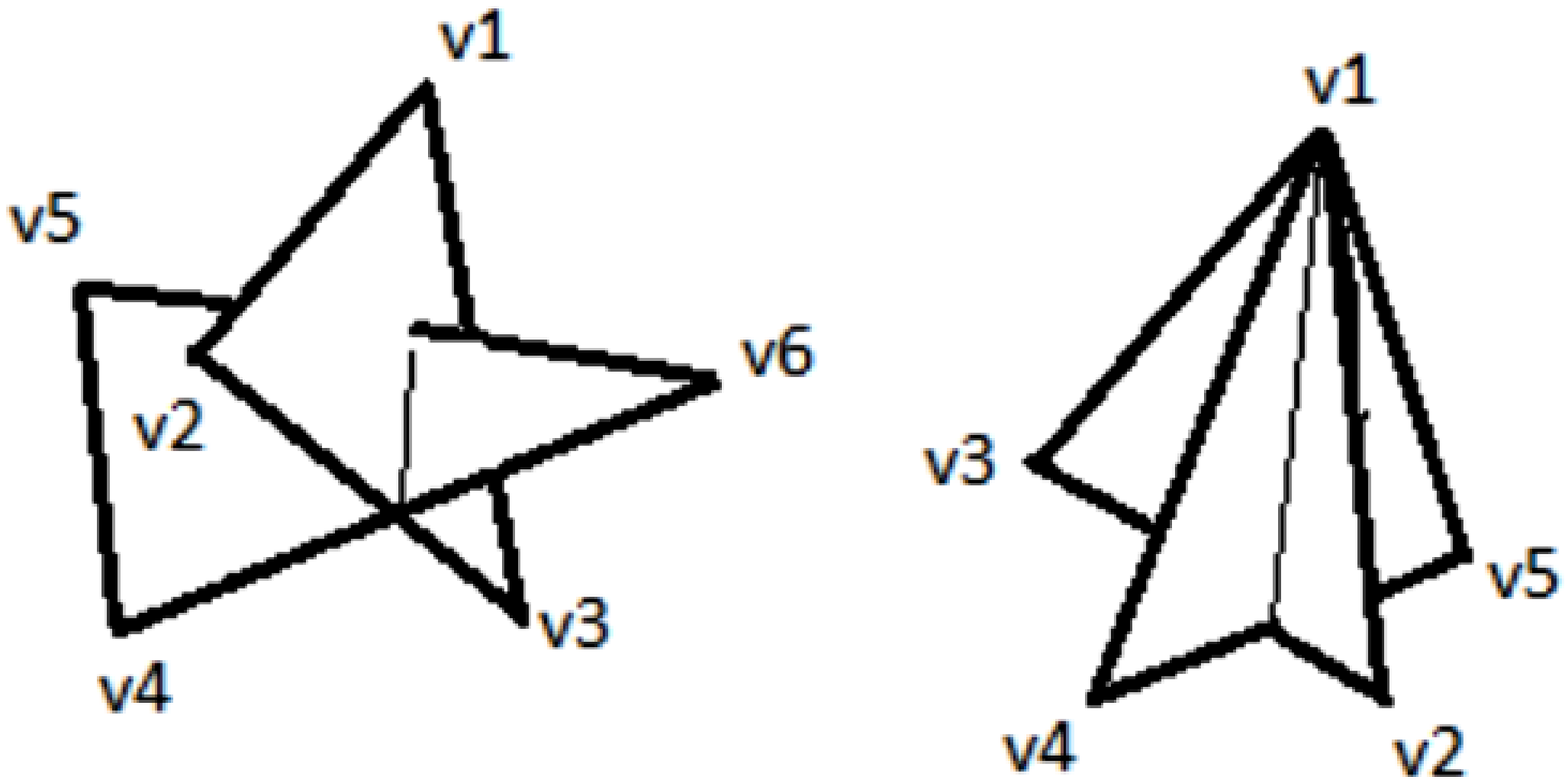}}}}
\caption{(i) (Left) Crossing between hyperedges $\{v1,v2,v3\}$ and $\{v4,v5,v5\}$. (ii) (Right) Intersection between hyperedges $\{v1,v2,v3\}$ and $\{v1,v4,v5\}$.}
\end{figure}

\subsection{Our Contribution}
In Section $3$, we first show that there are at least $4$ crossing pairs of $3$-simplices in a given set of $8$ points in general position in $\mathbb{R}^4$. This implies that $Cr_4(K^4_n) \geq  4 {n \choose 8}$. 
Thereafter, we use a similar idea to prove that $c_d = \Omega(\frac{2^d \log d}{\sqrt{d}})$. This implies that $Cr_d(K^d_n) = \Omega(\frac{2^d \log d}{\sqrt{d}})$ $n\choose 2d$.

As far as we know, this is the first non-trivial lower bound obtained on $Cr_d(K^d_{n})$. It is an exciting open problem to find out whether this lower bound is tight.

\section{Techniques used} 
\subsection{Gale Transformation}

For a positive integer $m$, consider a set $A$ of $m+d+1$ points $x_1,x_2\ldots x_{m+d+1}$ in general position in $\mathbb{R}^d$. It is easy to see that the matrix  
\[ M(x_1,x_2\ldots x_{m+d+1})= \left( \begin{array}{ccccc}x_1^{1} & x_2^{1} & ... & x_{m+d+1}^{1} \\ x_1^{2} & x_2^{2} & ... & x_{m+d+1}^{2} \\ ... & ... & ... & ... \\ x_1^{d} & x_2^{d} & ... & x_{m+d+1}^{d} \\ 1 & 1 & ... & 1 \end{array} \right)\] 
has $m$ null vectors. Let these null vectors be $(a^1_1, a^1_2, \ldots, a^1_{m+d+1})$,
$\ldots$, $(a^m_1, a^m_2, \ldots, a^m_{m+d+1})$. Consider the 
$m$-dimensional vectors $(a^1_1, a^2_1, \ldots, a^m_1)$, $(a^1_2, a^2_2, \ldots, a^m_2)$, $\ldots$, 
$(a^1_{m+d+1}, a^2_{m+d+1}, \ldots, a^m_{m+d+1})$. These $m+d+1$ points (representing $m+d+1$ vectors) in $\mathbb{R}^m$ are the Gale transform of $x_1, x_2, \ldots, x_{m+d+1}$ in $\mathbb{R}^d$. 
We denote the Gale transform of $A$ as $D(A)$. We define a \textit{proper linear separation} of the point set 
$D(A)$ to be a partition of $D(A)$ into two subsets $D_1(A)$ and $D_2(A)$ of size 
$\lfloor \frac{m+d+1}{2} \rfloor$ and $\lceil \frac{m+d+1}{2} \rceil$ respectively, by a hyperplane passing through origin. Then, we have the following:

\begin{lemma} 
\label{gale}
\cite{matousek1}
There is a bijection between the crossing pairs of $\lfloor \frac{m+d-1}{2} \rfloor$ and $\lceil \frac{m+d-1}{2} \rceil$-simplices in $A$ and the proper linear separations in $D(A)$.
\end{lemma}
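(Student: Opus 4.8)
The plan is to deduce Lemma~\ref{gale} from the standard Gale-duality dictionary between affine dependences of $A$ and linear functionals on $D(A)$, specialized to sign patterns supported on the whole index set. Write $n = m+d+1$ and let $N$ be the $m\times n$ matrix whose rows form a basis of $\ker M(x_1,\dots,x_n)$, so that the Gale vectors $\bar x_1,\dots,\bar x_n\in\mathbb{R}^m$ are exactly the columns of $N$. The first thing I would record is the duality identity obtained from $\mathrm{rowspace}(M)\perp\ker M$ and $\mathrm{rowspace}(M)\oplus\ker M=\mathbb{R}^n$: for $c=(c_1,\dots,c_n)\in\mathbb{R}^n$ one has $\sum_j c_j\bar x_j = 0$ in $\mathbb{R}^m$ iff $c\in\mathrm{rowspace}(M)$, i.e. $c_j=f(x_j)$ for some affine $f$ on $\mathbb{R}^d$; dually, $c$ is an affine dependence of $x_1,\dots,x_n$ (that is, $\sum_j c_j x_j = 0$ and $\sum_j c_j = 0$) iff $c\in\ker M$, i.e. $c_j=\langle\mu,\bar x_j\rangle$ for some $\mu\in\mathbb{R}^m$.

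Next I would translate "crossing pair of simplices on all of $A$'' into a statement about such dependences. A crossing pair here is a partition $[n]=S_1\sqcup S_2$ with $|S_1|=\lfloor n/2\rfloor$, $|S_2|=\lceil n/2\rceil$ for which the simplices $\sigma_i=\mathrm{conv}\{x_j:j\in S_i\}$ share a point in their relative interiors. I claim this is equivalent to the existence of an affine dependence $c$ of $x_1,\dots,x_n$ with $c_j>0$ for all $j\in S_1$ and $c_j<0$ for all $j\in S_2$. Forward: a point $p\in\mathrm{relint}\,\sigma_1\cap\mathrm{relint}\,\sigma_2$ can be written as $p=\sum_{j\in S_1}\alpha_j x_j=\sum_{j\in S_2}\beta_j x_j$ with all $\alpha_j,\beta_j>0$ and $\sum\alpha_j=\sum\beta_j=1$ (these are the barycentric coordinates, each $\sigma_i$ being a genuine simplex), so $c$ defined by $c_j=\alpha_j$ on $S_1$ and $c_j=-\beta_j$ on $S_2$ is the required dependence. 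Conversely, given such a $c$, set $A=\sum_{j\in S_1}c_j=-\sum_{j\in S_2}c_j>0$; then $\tfrac1A\sum_{j\in S_1}c_j x_j=\tfrac1A\sum_{j\in S_2}(-c_j)x_j$ is a point in the relative interior of both simplices. Now feed this into the duality identity: such a $c$ exists iff there is $\mu\in\mathbb{R}^m$ with $\langle\mu,\bar x_j\rangle>0$ for $j\in S_1$ and $\langle\mu,\bar x_j\rangle<0$ for $j\in S_2$, i.e. iff the hyperplane $\{y:\langle\mu,y\rangle=0\}$ through the origin strictly separates $\{\bar x_j:j\in S_1\}$ from $\{\bar x_j:j\in S_2\}$. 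Given the size constraint, this is precisely a proper linear separation of $D(A)$ with $D_1(A)=\{\bar x_j:j\in S_1\}$ and $D_2(A)=\{\bar x_j:j\in S_2\}$. Since $\lfloor n/2\rfloor-1=\lfloor\frac{m+d-1}{2}\rfloor$ and $\lceil n/2\rceil-1=\lceil\frac{m+d-1}{2}\rceil$, the simplex dimensions match the statement, and the assignment $(S_1,S_2)\mapsto(D_1(A),D_2(A))$ is the claimed bijection: it is well defined and surjective by the chain of equivalences, and injective because the $\bar x_j$ are pairwise distinct (indeed $D(A)$ is itself in general position in $\mathbb{R}^m$), so each part of $D(A)$ determines its index set.

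The main point requiring care is general position, which is exactly what turns this chain of "$\exists$'' statements into a clean bijection rather than a many-to-many relation, and it enters in three places: (i) it makes the vertex set of each $\sigma_i$ affinely independent (in the relevant range $m\le d$), so that "common point of the relative interiors'' is equivalent to the existence of a dependence with \emph{all} coordinates strictly nonzero, i.e. no vertex may be dropped; (ii) it forces every $\bar x_j\neq 0$ and no $m$ of the $\bar x_j$ to be linearly dependent, so a separating hyperplane through the origin can always be chosen with no Gale point on it, and the two parts genuinely have the prescribed sizes $\lfloor n/2\rfloor$ and $\lceil n/2\rceil$; (iii) it rules out degenerate affine dependences of $A$ with proper support, which would otherwise break the index-to-part correspondence. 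Modulo these routine general-position facts about Gale transforms (as in \cite{matousek1}), everything else is linear algebra, and I expect (i)--(iii) to be the only slightly delicate points.
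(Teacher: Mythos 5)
The paper states this lemma without proof, citing \cite{matousek1}; your argument is exactly the standard Gale-duality proof given there (affine dependences of $A$ with full support and prescribed signs correspond to sign patterns $\langle\mu,\bar x_j\rangle$ of linear functionals on $D(A)$), and it is correct. The general-position caveats you flag --- affine independence of each part so that the two convex hulls are genuine simplices, and the fact that no Gale vector needs to lie on the separating hyperplane --- are precisely what Lemma~\ref{gale1} supplies, so nothing essential is missing.
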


\begin{lemma} 
\label{gale1}
\cite{matousek1}
The $m+d+1$ points in $A$ are in a general position in $\mathbb{R}^d$ if and only if every set of $m$ vectors, corresponding to a set of $m$ points from $D(A)$, spans $\mathbb{R}^{m}$.
\end{lemma}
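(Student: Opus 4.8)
The plan is to translate both conditions into statements about ranks of column-submatrices of two matrices and then to invoke the standard duality between complementary minors. First I would reformulate the left-hand side: writing $M=M(x_1,\dots,x_{m+d+1})$ for the $(d+1)\times(m+d+1)$ matrix displayed above (whose $i$-th column is $(x_i^1,\dots,x_i^d,1)^T$), a $(d+1)$-element subset of the points is affinely independent precisely when the corresponding $d+1$ columns of $M$ are linearly independent, and it is routine that ``$A$ is in general position in $\mathbb{R}^d$'' is equivalent to ``\emph{every} $(d+1)\times(d+1)$ minor of $M$ is nonzero'' (any lower-dimensional degeneracy forces $d+1$ of the points onto a common hyperplane). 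For the right-hand side, arrange the chosen basis of the null space of $M$ as the rows of an $m\times(m+d+1)$ matrix $G$; its columns $g_1,\dots,g_{m+d+1}$ are exactly the points of $D(A)$, and a set of $m$ of them spans $\mathbb{R}^m$ iff the corresponding $m$ columns of $G$ are linearly independent. Since $S\mapsto S^c$ is a bijection between $(d+1)$-subsets and $m$-subsets of $\{1,\dots,m+d+1\}$, the lemma reduces to the purely linear-algebraic claim: for every $S$ with $|S|=d+1$, the columns of $M$ indexed by $S$ are linearly independent \emph{iff} the columns of $G$ indexed by $S^c$ are.

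To prove that claim I would use the single structural fact that the rows of $G$ span $\ker M$, which equals the orthogonal complement of the row space of $M$, so the two row spaces have complementary dimensions $m$ and $d+1$ inside $\mathbb{R}^{m+d+1}$, and $G$ has full row rank (hence $G^T$ is injective and $MG^T=0$). If the columns of $M$ on $S$ are dependent, pick a nonzero $w$ supported on $S$ with $Mw=0$; then $w\in\ker M$, so $w=G^Ty$ with $y\neq 0$, and the support condition reads $g_i^Ty=w_i=0$ for all $i\in S^c$, so the $m$ vectors $\{g_i:i\in S^c\}$ all lie in the hyperplane $y^{\perp}$ and are therefore dependent. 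The converse is symmetric: a nonzero $y$ annihilating $\{g_i:i\in S^c\}$ produces $w:=G^Ty\neq 0$ supported on $S$ with $Mw=MG^Ty=0$. (Equivalently, one could quote Jacobi's identity for complementary minors of the invertible matrix $\binom{M}{G}$ and its inverse, which shows directly that the $(d+1)\times(d+1)$ minor of $M$ on $S$ and the $m\times m$ minor of $G$ on $S^c$ differ by a fixed nonzero scalar.)

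The only points needing care --- and the closest thing to an obstacle --- are the rank/dimension bookkeeping and well-definedness. One must note that $\mathrm{rank}\,M=d+1$, so that $\ker M$ is genuinely $m$-dimensional and the ``$m$ null vectors'' chosen in the construction form a basis; in the forward direction this is part of what general position provides, and in the converse direction it follows from the hypothesis that some $m$ of the $g_i$ already span $\mathbb{R}^m$. One should also remark that $D(A)$ is only determined up to a simultaneous $\mathrm{GL}_m(\mathbb{R})$-action on the $g_i$, under which the property ``every $m$ of them span $\mathbb{R}^m$'' is invariant, so the statement is well posed. Beyond this, the argument is the standard Gale/oriented-matroid duality and presents no real difficulty.
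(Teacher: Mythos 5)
The paper offers no proof of this lemma at all---it is quoted from \cite{matousek1}---so there is nothing in the paper to compare your argument against; your route (reducing both sides to linear independence of complementary sets of columns of $M$ and of the matrix $G$ whose rows are the chosen null vectors, and exploiting $MG^T=0$ together with the bijection $S\mapsto S^c$) is the standard Gale-duality proof, and its core is sound. In particular, the direction the paper actually uses (general position of $A$ implies that every $m$ points of $D(A)$ span $\mathbb{R}^m$) is fully established by your argument: general position forces $\operatorname{rank} M=d+1$, hence the $m$ chosen null vectors form a basis of $\ker M$, $G^T$ is injective, and a nonzero $y$ annihilating $\{g_i: i\in S^c\}$ yields a nonzero $w=G^Ty$ supported on $S$ with $Mw=0$.

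There is, however, a genuine flaw in exactly the step you flagged as delicate. You assert that $\operatorname{rank} M=d+1$ (equivalently, that the rows of $G$ span all of $\ker M$) ``follows from the hypothesis that some $m$ of the $g_i$ already span $\mathbb{R}^m$.'' It does not: that hypothesis only gives $\operatorname{rank} G=m$, i.e.\ $\dim\ker M\geq m$, and constrains $M$ in no further way. Concretely, take $d=2$, $m=1$ and the collinear points $(0,0),(1,0),(2,0),(3,0)$; then $\ker M$ is $2$-dimensional, and choosing the single null vector $(1,-1,-1,1)$ gives $D(A)=\{1,-1,-1,1\}\subset\mathbb{R}^1$, every one of whose elements spans $\mathbb{R}^1$, although the points are not in general position. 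So your step ``$w\in\ker M$, hence $w=G^Ty$'' can fail, and the ``if'' direction of the lemma is literally false if one admits configurations whose affine hull is a proper subspace and takes for $D(A)$ any $m$ independent null vectors. The correct repair is not to derive $\operatorname{rank} M=d+1$ from the hypothesis on the $g_i$, but to invoke the standing assumption under which the Gale transform is defined in \cite{matousek1} (and intended here): the points affinely span $\mathbb{R}^d$, equivalently the $m$ null vectors form a basis of $\ker M$. Under that assumption your two symmetric implications are complete, and the same assumption is what makes the stacked matrix $\binom{M}{G}$ invertible in your alternative complementary-minors remark.
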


\subsection{Ham Sandwich Theorem}

\begin{lemma}
\label{hamsandwich}
\cite{matousek}  Let $C_1, C_2, \ldots, C_d \in \mathbb{R}^d$ be finite point sets. Then, there exists a hyperplane $h$ that simultaneously bisects $C_1, C_2, \ldots, C_d$, i.e., each of open half-spaces 
defined by $h$ has at most $\lfloor \frac{1}{2} |C_i| \rfloor$ points of $C_i$.
\end{lemma}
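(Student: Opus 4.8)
The plan is to derive this bisection statement from the classical Borsuk--Ulam theorem, in the form: every continuous odd map $g\colon S^d \to \mathbb{R}^d$ has a zero. The first move is to reduce the finite point-set version to a continuous \emph{measure} version, because the Borsuk--Ulam argument requires a continuous dependence on the separating hyperplane, whereas the function counting points on one side of a hyperplane is discontinuous for a finite set. To that end I would replace each point $p \in C_i$ by a tiny ball $B(p,\varepsilon)$ carrying unit mass, obtaining absolutely continuous measures $\mu_1^\varepsilon,\ldots,\mu_d^\varepsilon$ on $\mathbb{R}^d$ with total masses $|C_1|,\ldots,|C_d|$.

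Next I would set up the standard parametrization of oriented hyperplanes. Embedding $\mathbb{R}^d \hookrightarrow \mathbb{R}^{d+1}$ via $x \mapsto (1,x)$, each $a=(a_0,a_1,\ldots,a_d)\in S^d$ defines an affine functional $\ell_a(x)=a_0+a_1x_1+\cdots+a_dx_d$ whose positive locus $H_a^+=\{x:\ell_a(x)\ge 0\}$ is a closed half-space, with $H_{-a}^+$ the complementary half-space. Since hyperplanes carry zero mass for absolutely continuous measures, $\mu_i^\varepsilon(H_a^+)+\mu_i^\varepsilon(H_{-a}^+)=\mu_i^\varepsilon(\mathbb{R}^d)$, so the map $g=(g_1,\ldots,g_d)\colon S^d\to\mathbb{R}^d$ with $g_i(a)=\mu_i^\varepsilon(H_a^+)-\tfrac12\,\mu_i^\varepsilon(\mathbb{R}^d)$ is continuous and odd. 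By Borsuk--Ulam there is $a^\varepsilon$ with $g(a^\varepsilon)=0$, meaning the hyperplane $\{\ell_{a^\varepsilon}=0\}$ halves each $\mu_i^\varepsilon$ exactly. The degenerate poles $a=(\pm1,0,\ldots,0)$ are automatically excluded, since there $H_a^+$ is all of $\mathbb{R}^d$ or empty and $g(a)\neq 0$, so $a^\varepsilon$ corresponds to a genuine hyperplane $h_\varepsilon$.

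Finally I would pass to the limit $\varepsilon\to 0$. As $S^d$ is compact, a subsequence of the parameters $a^\varepsilon$ converges to some $a^\ast$, which is again not a pole because any bisecting hyperplane must meet the bounded region containing the points and hence cannot escape to a degenerate position; thus $a^\ast$ defines an honest hyperplane $h$. Fixing $i$ and writing $n_i^+,n_i^-,n_i^0$ for the numbers of points of $C_i$ strictly on the positive side, strictly on the negative side, and on $h$, I would argue that for small $\varepsilon$ every ball around a strictly-positive point lies fully on the positive side of $h_\varepsilon$, and symmetrically for negative points. The exact mass balance $\mu_i^\varepsilon((h_\varepsilon)^+)=|C_i|/2$ then forces $n_i^+\le |C_i|/2\le n_i^++n_i^0$, and since $n_i^+$ is an integer this gives $n_i^+\le\lfloor |C_i|/2\rfloor$; symmetrically $n_i^-\le\lfloor |C_i|/2\rfloor$, which is exactly the asserted bound on each open half-space. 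The main obstacle I anticipate is precisely this discretization step: ensuring the limiting hyperplane is non-degenerate and carefully handling points that end up on it, so that the open-half-space counts correctly inherit the exact mass bisection of the smoothed problem.
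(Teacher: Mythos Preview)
The paper does not prove this lemma at all; it is simply stated with a citation to Matou\v{s}ek's \emph{Using the Borsuk--Ulam Theorem} and then used as a black box in the subsequent arguments. Your Borsuk--Ulam approach is in fact the standard proof found in that reference, and your sketch is essentially correct, including the smoothing-and-limit step you flag as the main obstacle.
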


\section{A Lower Bound on $Cr_4(K^4_8)$ and $Cr_d(K^d_{2d})$}
\label{4dimension}

\begin{lemma}
\label{8crossings}
There are at least $4$ crossing pairs of $3$-simplices spanned by a set of $8$ points in general position in 
$\mathbb{R}^4$.
\end{lemma}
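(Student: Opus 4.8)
The plan is to work in the Gale transform. Take $8$ points in general position in $\mathbb{R}^4$; here $m+d+1 = 8$ with $d=4$, so $m=3$, and the Gale transform $D(A)$ is a set of $8$ vectors in $\mathbb{R}^3$. By Lemma \ref{gale1}, general position of $A$ means every $3$ of these vectors are linearly independent, so no three of the corresponding $8$ points on the sphere $S^2$ (after normalizing) lie on a great circle. By Lemma \ref{gale} (with $\lfloor\frac{m+d-1}{2}\rfloor = \lceil\frac{m+d-1}{2}\rceil = 3$), the number of crossing pairs of $3$-simplices in $A$ equals the number of proper linear separations of $D(A)$, i.e. the number of ways to split the $8$ vectors into two groups of $4$ by a hyperplane through the origin. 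So it suffices to show that any $8$ vectors in $\mathbb{R}^3$ in this general position admit at least $4$ balanced ($4$–$4$) linear separations.

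To produce such separations I would iterate the Ham Sandwich Theorem (Lemma \ref{hamsandwich}) in $\mathbb{R}^3$. First apply it to the three sets $C_1 = C_2 = C_3 = D(A)$ to get a hyperplane $h_0$ through the origin that bisects $D(A)$; since $|D(A)|=8$ is even and no three vectors are coplanar with the origin beyond what general position allows, $h_0$ genuinely splits the vectors $4$–$4$, giving one proper linear separation. To get more, I would perturb or re-partition: split $D(A)$ into smaller groups and Ham-Sandwich those, or take the $4$–$4$ split from $h_0$ and re-bisect each side. Concretely, a cleaner route is to fix one vector $v$ and apply Ham Sandwich to the remaining seven together with auxiliary sets so as to force $v$ onto a prescribed side; doing this for several choices of $v$, together with an argument that distinct choices yield distinct partitions, should manufacture the required count. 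The combinatorial bookkeeping — ensuring the separations obtained are pairwise distinct and that each is genuinely balanced rather than having a vector on the hyperplane — is where care is needed.

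The main obstacle is precisely that counting step: a single application of Ham Sandwich yields only one separation, and naive repeated applications can return the same hyperplane. I expect the proof to exploit a continuity/rotation argument — rotating a bisecting hyperplane through the configuration and tracking how the $4$–$4$ partition changes as the hyperplane sweeps past individual vectors, each such event corresponding to a new proper linear separation — and to extract at least $4$ distinct partitions from the fact that there are $8$ vectors to sweep past. An alternative is a direct case analysis on the combinatorial type of $8$ points on $S^2$ in general position. I would pursue the sweeping argument first, since it is the one that generalizes to the $c_d = \Omega(2^d\log d/\sqrt{d})$ bound promised in the next section, and fall back on ad hoc case analysis only for the small case $d=4$ if the sweep does not immediately give the constant $4$.
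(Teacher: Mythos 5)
Your reduction is exactly the paper's: with $m=3$, the Gale transform $D(A)$ is a set of $8$ vectors in $\mathbb{R}^3$ in general position (Lemma \ref{gale1}), and by Lemma \ref{gale} it suffices to exhibit $4$ distinct proper linear separations of $D(A)$. But the heart of the proof is producing those four distinct separations, and this is precisely where your proposal stops at a list of possible directions (perturb, re-partition, sweep, or ad hoc case analysis) without carrying any of them out. The paper's mechanism, which you do not have, is an \emph{adaptive three-class coloring}: put the origin alone in $C_3$ (this forces every Ham Sandwich hyperplane to pass through the origin, so it is a linear separation), and choose $C_1, C_2$ to partition $D(A)$ so that the bisection requirement on $C_1$ and $C_2$ \emph{forces} the new hyperplane to split a set of points that stayed together in all previously obtained separations --- which is what guarantees the new separation is genuinely new. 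Concretely: first color all of $D(A)$ with $c_1$ to get one separation, say $\{p_1,\dots,p_4\}$ vs.\ $\{p_5,\dots,p_8\}$; then color these two halves $c_1$ and $c_2$ respectively, so the next hyperplane must split each half $2$--$2$, giving a second separation; then color a pair such as $\{p_1,p_2\}$ that has been together in both separations with $c_1$ and the rest with $c_2$, forcing that pair apart and yielding a third; and a short two-case analysis (depending on whether all of the pairs $\{p_3,p_4\},\{p_5,p_6\},\{p_7,p_8\}$ got separated or only one did) supplies a fourth, in the harder case by coloring a $4$-element set, e.g.\ $\{p_1,p_2,p_3,p_5\}$, three of whose points were together in every previous separation. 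Without some such forcing device, repeated applications of Ham Sandwich can indeed return the same hyperplane, as you yourself note, so the count of $4$ is not established by your sketch.

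A second, smaller gap: your claim that the first Ham Sandwich hyperplane ``genuinely splits the vectors $4$--$4$'' does not follow just from $|D(A)|=8$ being even; the theorem only bounds the open half-spaces, and up to two of the points may lie on the hyperplane (general position of $D(A)$ rules out three or more, since any hyperplane through the origin containing three of the vectors would contradict their spanning $\mathbb{R}^3$). The paper fixes this with a rotation argument: rotate the hyperplane about a suitable axis so that each of the at most two points on it moves to a prescribed side, keeping the origin on the hyperplane and not disturbing the other points. You gesture at both issues (points on the hyperplane, distinctness of separations) but resolve neither, so as it stands the proposal establishes only one proper linear separation, not four.
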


\begin{proof}
Given a set $A$ of $8$ points in general position in $\mathbb{R}^4$, we consider the set $D(A)$ (the Gale transform) in $\mathbb{R}^3$ and obtain a lower bound on the number of proper linear separations of $D(A)$. By Lemma \ref{gale}, this lower bound implies a lower bound on the number of crossing pairs of $3$-simplices spanned by $A$. The Lemma \ref{gale1} implies that the vectors in $D(A)$ are in a general position, i.e., any set of $3$ vectors spans $\mathbb{R}^3$. Therefore, any hyperplane $h$ that passes through the origin would have at most two points from $D(A)$ on it. Any such hyperplane can be rotated so that we can make either of the two points go above or below $h$, while keeping the origin on it and maintaining the partitioning of the remaining points with respect to $h$. 

To apply the Ham Sandwich theorem in $\mathbb{R}^3$, we assign the origin to $C_3$ and create $2$ disjoint point sets $C_1$ and $C_2$ from the $8$ points in $D(A)$. We use the colors $c_1, c_2$ and $c_3$ to identify the sets $C_1, C_2$ and $C_3$ respectively. We use $c_3$ to color the origin. We proceed in the following manner with 
the colorings of the points in $D(A)$ with $c_1$ and $c_2$, assuming that $D(A) =\{p_1, p_2, p_3, p_4, p_5, p_6, p_7, p_8\}$.

\begin{itemize}
\item We color all the points with $c_1$, and don't color any of the points with $c_2$. It leads to a proper linear separation of $D(A)$ (through applying the Ham Sandwich theorem and rotating the partitioning hyperplane if required), which we assume to be $\{\{p_1, p_2, p_3, p_4\}$, $\{p_5, p_6, p_7, p_8\}\}$, without any loss of generality.
\item We color the points in first group of points $\{p_1, p_2, p_3, p_4\}$ with $c_1$ and the second group of points $\{p_5, p_6, p_7, p_8\}$ with $c_2$. It gives a new proper linear separation. Without any loss of generality, we can assume it to be $\{\{p_1, p_2, p_5, p_6\}$, $\{p_3, p_4, p_7, p_8\}\}$. Note that the pairs $\{p_1, p_2\}, \{p_3, p_4\}, \{p_5, p_6\}$ and  
$\{p_7, p_8\}$ have stayed together in both these separations.
\item We color the points $p_1$ and $p_2$ with $c_1$ and the rest of the points with $c_2$. In the resulting proper linear separation, the points $p_1$ and $p_2$ get separated and hence this proper linear separation is a new one. Here, we have two cases: \textbf{(i)} all of the remaining pairs, i.e., $\{p_3, p_4\}, \{p_5, p_6\}$ and $\{p_7, p_8\}$, get separated, \textbf{(ii)} one of these remaining pairs gets separated and the rest two pairs are still together.
\item In the \textbf{(ii)}-nd case, we just color the two points in one of the {\it unseparated} pairs with $c_1$ and rest all with $c_2$. We get a new proper linear separation.
\item In the \textbf{(i)}-st case, we can assume without a loss of generality that the proper linear separation is:  $\{p_1, p_3, p_5, p_7\}$, $\{p_2, p_4, p_6, p_8\}$. Consider the set of points $\{p_1, p_2, p_3, p_5\}$. In every proper linear separation obtained till now, three out of these four points have always been in the same partition. We color these points with $c_1$ and the rest with $c_2$. This gives a new proper linear separation. \qed
\end{itemize} 
\end{proof}

\noindent A similar argument can be used to prove the following result.

\begin{lemma}
\label{logdcrossings}
There are $\Omega(\frac{2^d \log d}{\sqrt{d}})$ crossing pairs of $(d-1)$-simplices spanned by a set of $2d$ points in general position in $\mathbb{R}^d$.
\end{lemma}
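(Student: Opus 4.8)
The plan is to generalize the adaptive argument of Lemma \ref{8crossings}. Given a set $A$ of $2d$ points in general position in $\mathbb{R}^d$, I would pass to the Gale transform $D(A)$, which here consists of $2d$ vectors in $\mathbb{R}^{m}$ with $m=d-1$; by Lemma \ref{gale1} these vectors are in general position, i.e. every $d-1$ of them span $\mathbb{R}^{d-1}$, so every hyperplane through the origin carries at most $d-2$ of them. With $m=d-1$ we have $\lfloor\frac{m+d-1}{2}\rfloor=\lceil\frac{m+d-1}{2}\rceil=d-1$ and $\lfloor\frac{m+d+1}{2}\rfloor=\lceil\frac{m+d+1}{2}\rceil=d$, so by Lemma \ref{gale} it suffices to exhibit $\Omega(\frac{2^d\log d}{\sqrt d})$ distinct \emph{proper linear separations} of $D(A)$, that is, partitions of $D(A)$ into two blocks of size $d$ by a hyperplane through the origin.

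The basic gadget is the Ham Sandwich theorem (Lemma \ref{hamsandwich}) in $\mathbb{R}^{d-1}$: given any partition of $D(A)$ into color classes $C_1,\dots,C_k$ with $k\le d-2$, adjoin the singleton class $\{\text{origin}\}$ and apply Lemma \ref{hamsandwich} to obtain a hyperplane $h$ through the origin bisecting every $C_i$. Since $h$ carries at most $d-2$ points of $D(A)$, the two open sides together carry at least $d+2$ points and at most $d$ each, so one can rotate $h$ about the origin, exactly as in Lemma \ref{8crossings}, to push the points of $D(A)\cap h$ to the two sides and reach a genuine $d$--$d$ split; this split is a proper linear separation that ``respects'' the coloring (each $C_i$ is split as evenly as the rotation allows). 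On top of this gadget I would run the adaptive scheme of Lemma \ref{8crossings}: maintain a set $\mathcal{S}$ of proper linear separations already produced and a partition $\mathcal{B}$ of $D(A)$ into blocks such that every separation in $\mathcal{S}$ coarsens $\mathcal{B}$ (each block lies wholly on one side). Starting from $\mathcal{B}=\{D(A)\}$ and $\mathcal{S}=\emptyset$, at each stage one picks blocks still whole in every separation of $\mathcal{S}$, chooses a coloring forcing those blocks to be cut, obtains from the gadget a separation that is therefore new, and refines $\mathcal{B}$.

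The quantitative issue is that this refinement alone is shallow: each gadget separation tends to bisect every current block, so $|\mathcal{B}|$ roughly doubles per round and the process ends after only $\Theta(\log d)$ rounds, giving just $\Theta(\log d)$ separations. To reach the target I would feed in the Van Kampen--Flores extension count from the introduction: a single crossing pair of (about) $\frac d2$-dimensional simplices on (about) $d$ of the $2d$ vertices extends to crossing pairs of $(d-1)$-simplices in $\binom{d-2}{\lfloor(d-2)/2\rfloor}=\Theta(\frac{2^d}{\sqrt d})$ ways (with the $O(1)$ adjustment for odd $d$ as in the introduction), each such extension being a distinct crossing pair and hence, by Lemma \ref{gale}, a distinct proper linear separation. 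The aim is then to attach the $\Theta(\log d)$-round adaptive refinement \emph{inside} each of these $\Theta(2^d/\sqrt d)$ extension classes, run disjointly, for a total of $\Theta(\frac{2^d\log d}{\sqrt d})$ pairwise-distinct separations.

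The hard part is exactly this combination. One must (a) organize the colorings so that the logarithmically deep refinement carried out within one extension class never reproduces a separation arising from another class, i.e. the $\Theta(2^d/\sqrt d)$ clusters of size $\Theta(\log d)$ are genuinely disjoint; (b) generalize the parity case analysis of Lemma \ref{8crossings} --- its two cases become $\Theta(d)$ cases, according to how many of the current blocks a given coloring actually cuts --- so that one can always certify a new separation; and (c) handle the degenerate situation in which up to $d-2$ points of $D(A)$ lie on the Ham Sandwich hyperplane, verifying that the rotation to a $d$--$d$ split can be chosen to preserve the invariant ``every separation in $\mathcal{S}$ coarsens $\mathcal{B}$'' on which all distinctness claims rest. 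I expect (a) and (c) to be the principal obstacles; the identity $\binom{d-2}{\lfloor(d-2)/2\rfloor}=\Theta(2^d/\sqrt d)$ together with the $\Theta(\log d)$ depth of repeated bisection are the two quantitative inputs that, once merged, yield the claimed $\Omega(\frac{2^d\log d}{\sqrt d})$.
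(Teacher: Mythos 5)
There is a genuine gap, and it is exactly the one you flag as ``the hard part'': your construction has no mechanism that lets the $\Theta(\log d)$ factor and the $\Theta(2^d/\sqrt d)$ factor multiply. By taking the Gale transform of \emph{all} $2d$ points you land in $\mathbb{R}^{d-1}$, where (with $m=d-1$) every proper linear separation already corresponds, via Lemma \ref{gale}, to a crossing pair of full-size $(d-1)$-simplices. Consequently there is nothing left to extend: the $\binom{d-2}{\lfloor (d-2)/2\rfloor}$ extension count from the introduction applies to crossing pairs of \emph{mid-dimensional} simplices spanned by a proper subset of the $2d$ points, not to separations of the whole dual configuration. Your adaptive bisection scheme in $\mathbb{R}^{d-1}$ therefore yields only $\Theta(\log d)$ crossing pairs, and the proposal to ``attach the refinement inside each extension class'' is not a defined construction --- an extension class is indexed by a crossing pair of smaller simplices on a subset, and you give no way to manufacture $\Theta(\log d)$ distinct such pairs per class; items (a)--(c) in your last paragraph are precisely the unproved content. (Your step (c) is also harder than in Lemma \ref{8crossings}: in $\mathbb{R}^{d-1}$ a hyperplane through the origin can contain up to $d-2$ of the dual points, not just $2$, so the rotation argument does not carry over verbatim.)

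The paper's proof avoids all of this by choosing \emph{which points to dualize}: fix a subset $A'$ of only $d+4$ of the $2d$ points, so that the Gale transform $D(A')$ consists of $d+4$ vectors in $\mathbb{R}^{3}$, with at most two of them on any hyperplane through the origin. In this fixed-dimensional dual the argument of Lemma \ref{8crossings} applies essentially verbatim (origin colored $c_3$, two point colors $c_1,c_2$, Ham Sandwich plus the two-point rotation), and the adaptive recoloring can be iterated while some block of previously-unseparated points has size at least $2$, i.e.\ for $\Theta(\log d)$ rounds, giving $\Theta(\log d)$ proper linear separations of $D(A')$. Each such separation is, by Lemma \ref{gale}, a crossing pair of a $\lfloor\frac{d+2}{2}\rfloor$-simplex and a $\lceil\frac{d+2}{2}\rceil$-simplex spanned by $A'$, and \emph{these} mid-dimensional pairs are then extended to crossing pairs of $(d-1)$-simplices by distributing the remaining $d-4$ points, in $\binom{d-4}{\lceil\frac{d-4}{2}\rceil}=\Theta(\frac{2^d}{\sqrt d})$ ways; distinctness is immediate because the restriction of the resulting partition to $A'$ recovers the separation and the added vertices determine the extension. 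So the missing idea in your attempt is to run the Gale/Ham-Sandwich machinery on a $(d+4)$-point subset (dual dimension $3$) rather than on all $2d$ points (dual dimension $d-1$); that single choice is what makes the logarithmic count and the binomial extension count compose.
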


\begin{proof}
Consider any set $A'$ of $d+4$ points from the given set of $2d$ points in general position in $\mathbb{R}^d$. The Gale transform $D(A')$ of these $d+4$ points in $\mathbb{R}^d$ is a set of $d+4$ vectors in $\mathbb{R}^3$, such that any set of $3$ of these vectors spans $\mathbb{R}^{3}$. Therefore, any hyperplane $h$ that passes through the origin would have at most two points from $D(A')$ on it. Any such hyperplane can be rotated so that we can make either of the two points go above or below $h$, while keeping the origin on it and maintaining the partitioning of the remaining points with respect to $h$. 

We proceed as in Lemma \ref{8crossings}, i.e., we assign the origin to $C_3$ and create $2$ disjoint point sets $C_1$ and $C_2$ from the $d+4$ points in $D(A')$. Through applying the Ham Sandwich theorem and rotating the partitioning hyperplane if required, we get $\Theta(\log d)$ proper linear separations of $D(A')$. Each of these proper linear separations corresponds to a  $\lfloor \frac{d+2}{2} \rfloor$-simplex crossing a $\lceil \frac{d+2}{2} \rceil$-simplex spanned by $A$. Each of these crossing pairs can be extended to a crossing pair of $(d-1)$-simplices in $\binom{d-4}{\lceil \frac{d-4}{2} \rceil} = \Theta(\frac{2^d}{\sqrt{d}})$ ways.
\qed
\end{proof}

\small

\end{document}